\newtheorem{theorem}{Theorem}[section]
\newtheorem{corollary}{Corollary}[section]
\newtheorem{proposition}{Proposition}[section]
\newtheorem{definition}{Definition}[section]
\newtheorem{remark}{Remark}[section]
\begin{document}
\thispagestyle{empty} \setcounter{page}{1}

\begin{center}
{\Large\bf On more general forms of proportional fractional operators}

\vskip.20in
  Fahd Jarad$^{a}$, Manar A. Alqudah$^{b}$,  Thabet Abdeljawad$^{c,d}$\\
{\footnotesize
$^{a}$Department of Mathematics, \c{C}ankaya University, 06790 Ankara, Turkey\\
 email: fahd@cankaya.edu.tr\\
$^{b}$ Department of Mathematical Sciences, Princess Nourah Bint Abdulrahman University\\ P.O. Box 84428, Riyadh 11671, Saudi Arabia.\\
email:maalqudah@pnu.edu.sa.\\
$^{c}$Department of Mathematics and Physical Sciences,
Prince Sultan
University\\ P. O. Box 66833,  11586 Riyadh, Saudi Arabia\\
email:tabdeljawad@psu.edu.sa\\
$^d$  Department of Medical Research, China Medical University, 40402, Taichung, Taiwan 
 
 }
\end{center}
\vskip.2in
{\footnotesize {\noindent \textbf{Abstract}
 In this article, we propose  new  proportional fractional operators generated from local proportional  derivatives of  a function with respect to another function. We present some properties of these fractional operators which can be also called proportional fractional operators of a function with respect to another function or proportional fractional operators with dependence on a kernel function.\\
\textbf{Keywords:} Proportional derivative of a function with respect to another function, general fractional proportional  integrals, general fractional proportional  derivatives.}}
\section{Introduction}

The fractional calculus, which is engaged in integral and differential operators of arbitrary  orders, is as old as the conceptional calculus that deals with integrals and derivatives of non-negative integer orders. Since not all of the real phenomena can be modeled using the operators in the traditional calculus, researchers searched for generalizations of these operators. It turned out that the fractional operators are excellent  tools to use in modeling long-memory processes and many phenomena that appear in physics, chemistry, electricity, mechanics and many other disciplines. Here, we invite the readers to read \cite{podlubny,Samko,f1,f222,f2,f3} and the reference cited in these books. However, for the sake of better understanding and modeling real world problems,  researchers were in need of other types of fractional operators that were confined to Riemann-Liouville fractional operators. In the literature, one can find many works that propose new fractional operators. We mention \cite{had,Kat1,Kat2,fahd3,fahd1,fahd11}. Nonetheless, the fractional integrals and derivatives which were proposed  in these works were just particular cases of what so called fractional integrals/derivatives of a function with respect to another function \cite{Samko,f2,fahd10}. There are  other types of fractional operators which were suggested in the literature. 

On the other hand, due to the singularities found in the traditional fractional operators which are thought to make some  difficulties in the modeling process, some researches recently proposed  new types of non-singular fractional operators. Some of these operators contain exponential kernels and some of them involve the Mittag-Leffler functions. For such types of fractional operators we refer to \cite{FCaputo,Losada,TD ROMP,Abdon,TD JNSA}.

All the fractional operators considered in the  references in the first and the second paragraphs are non-local. However, there are many local operators  found in the literature that allow differentiation to a non-integer order and these are called local fractional operators.  In \cite{kh}, Khalil et al.  introduced the so called conformable (fractional) derivative.  The author in \cite{T11} presented other basic concepts of conformable  derivatives. We would like to mention that the fractional operators proposed in \cite{Kat1,Kat2} are the non-local fractional version of the local operators suggested in \cite{kh}. In addition, the  non-local fractional version of the ones in \cite{T11} can be seen in \cite{fahd11}.

It is customary  that  any derivative  of order 0 when performed to a function should give  the function itself. This essential property is dispossessed by the conformable derivatives. Notwithstanding, in \cite{Anderson1,Anderson2}, the authors   introduced a newly defined local derivative that tend to the original function as the order  tends to zero and hence improved the conformable derivatives. In addition to this, the non-local fractional operators that emerge  from iterating the  above-mentioned  derivative were held forth in \cite{fahd12}.

Motivated by the above mentioned background, we extend the work done in \cite{fahd12} introduce a new generalized fractional calculus based on  the proportional derivatives of a function with respect to another function in paralel  with the definition discussed in \cite{Anderson1}.  The kernel obtained in the fractional operators which will be proposed contains an exponential function and is function dependent.  The  semi--group properties will be discussed.

The article is organized as follows: Section 2 presents  some essential definitions for fractional derivatives and integrals.  In Section 3 we present the general forms of the fractional proportional integrals and derivatives. In section 4, we present the general form of Caputo fractional proportional derivatives. In the end, we conclude our results.

\section{Preliminaries}
In this section, we present some essential definitions of some fractional derivatives and integrals. We first present the traditional fractional operators and then the fractional proportional operators.

\subsection{The conventional fractional operators and their general forms}
For $\alpha \in \mathbb{C},~Re(\alpha)>0$, 
 the left Riemann--Liouville fractional integral of order $\alpha $  has the f form
\begin{equation}\label{001}
(_{a}I^\alpha f)(x)=\frac{1}{\Gamma(\alpha)}\int_a^x (x-u)^{\alpha-1}f(u)du.
\end{equation}
  
  The  right Riemann--Liouville fractional integral of order $\alpha >0$ is  defined by
\begin{equation}\label{002}
(I_b^\alpha f)(x)=\frac{1}{\Gamma(\alpha)}\int_x^b (u-x)^{\alpha-1}f(u)du.
\end{equation}
 The left Riemann--Liouville fractional derivative of order $\alpha, Re(\alpha)\geq 0 $ is given as
 \begin{equation}\label{003}
 (_{a}D^\alpha f)(x)=\Big(\frac{d}{dx}\Big)^n(_{a}I^{n-\alpha} f)(x),~~n=[\alpha]+1.
 \end{equation}
  The right Riemann--Liouville fractional derivative of order $\alpha, Re(\alpha)\geq 0 $ reads
\begin{equation}\label{004}
(D_b^\alpha f)(t)=\Big(-\frac{d}{dt}\Big)^n(I_b^{n-\alpha} f)(t).
\end{equation}

The left Caputo fractional derivative  has the following form
\begin{equation}\label{005}
  (_{a}^{C}D^\alpha f)(x)=\big(_{a}I^{n-\alpha} f^{(n)}\big)(x),~~n=[\alpha]+1.
\end{equation}

The  right Caputo fractional derivative   becomes
\begin{equation}\label{006}
(^CD_b^\alpha f)(x)=\big(I_b^{n-\alpha}(-1)^nf^{(n)}\big)(x).
\end{equation}

The generalized left and right fractional integrals in the sense of Katugampola \cite{Kat1} are given respectively as
\begin{equation}\label{015}
(_{a}\textbf{I}^{\alpha,\rho} f)(x)=\frac{1}{\Gamma(\alpha)}\int_a^x(\frac{x^\rho-u^\rho}{\rho})^{\alpha-1} f(u)\frac{du}{u^{1-\rho}}
\end{equation}
and
\begin{equation}\label{016}
(\textbf{I}_{b}^{\alpha,\rho}f)(x)=\frac{1}{\Gamma(\alpha)}\int_t^b (\frac{u^\rho- x^\rho}{\rho})^{\alpha-1} f(u)\frac{du}{u^{1-\rho}}.
\end{equation}
The  generalized left and right fractional  derivatives in the sense of Katugampola \cite{Kat2}  are defined respectively as
\begin{eqnarray}\label{017}\nonumber
(_{a}\textbf{D}^{\alpha,\rho} f)(x)&=&\gamma^n(_{a}\textbf{I}^{n-\alpha,\rho} f)(t)\\&=&\frac{\gamma^n}{\Gamma(n-\alpha)}\int_a^x(\frac{x^\rho-u^\rho}{\rho})^{n-\alpha-1} f(u)\frac{du}{u^{1-\rho}}
\end{eqnarray}
and
\begin{eqnarray}\label{018}\nonumber
(\textbf{D}_{b}^{\alpha,\rho} f)(x)&=& (-\gamma)^n(\textbf{I}_b^{n-\alpha,\rho} f)(x)\\
&=&\frac{(-\gamma)^n}{\Gamma(n-\alpha)}\int_x^b(\frac{u^\rho-x^\rho}{\rho})^{n-\alpha-1} f(u)\frac{du}{u^{1-\rho}},
 \end{eqnarray}
where $\rho>0$ and   $\gamma=x^{1-\rho}\frac{d}{dx}$.
The Caputo modification of the left and right generalized fractional derivatives in the sense of Jarad et al. \cite{fahd3} are presented respectively as
\begin{eqnarray}\label{019}\nonumber
 (_{a}^C\textbf{D}^{\alpha,\rho} f)(x)&=&(_{a}\textbf{I}^{n-\alpha,\rho}\gamma^n f)(x)\\&=&\frac{1}{\Gamma(n-\alpha)}\int_a^x(\frac{x^\rho-u^\rho}{\rho})^{n-\alpha-1}\gamma^n f(u)\frac{du}{u^{1-\rho}},
\end{eqnarray}
and
\begin{eqnarray}\label{020}\nonumber
(^C\textbf{D}_{b}^{\alpha,\rho} f)(x)&=& (_{a}\textbf{I}^{n-\alpha,\rho}(-\gamma)^n f)(x)\\
&=&\frac{1}{\Gamma(n-\alpha)}\int_x^b(\frac{u^\rho-x^\rho}{\rho})^{n-\alpha-1} (-\gamma)^nf(u)\frac{du}{u^{1-\rho}}.
\end{eqnarray}
For $\alpha \in \mathbb{C},~Re(\alpha)>0$ the left Riemann-Liouville fractional integral of order $\alpha $ of $f$ with respect to a continuously differentiable  and increasing function $g$  has the following form \cite{Samko,f2}  
\begin{equation}\label{3}  
~_{a}I^{\alpha,g} f(x)=\frac{1}{\Gamma(\alpha)}\int_{a}^x \Big(g(x)-g(u)\Big)^{\alpha-1}f(u)g'(u)du.
\end{equation}
For $\alpha \in \mathbb{C},~Re(\alpha)>0$ the right Riemann-Liouville fractional integral of order $\alpha $ of $f$ with respect to a continuously differentiable  and increasing function $g$  has the following form \cite{Samko,f2} 
\begin{equation}\label{333}  
I_b^{\alpha,g} f(x)=\frac{1}{\Gamma(\alpha)}\int_{x}^b \Big(g(u)-g(x)\Big)^{\alpha-1}f(u)g'(u)du.
\end{equation}
 For $\alpha \in \mathbb{C},~Re(\alpha)\geq 0$,  the generalized left and right Riemann-Liouville fractional derivative of order $\alpha $ of $f$ with respect to a continuously differentiable  and increasing function $g$  have respectively the form \cite{Samko,f3}
 \begin{eqnarray}\label{4}\nonumber
 ~_{a}D^{\alpha,g}  f(x)&=&\Big(\frac{1}{g'(x)}\frac{d}{dx}\Big)^n(~_{a}I^{n-\alpha,g} f)(x)\\&=&\frac{\Big(\frac{1}{g'(x)}\frac{d}{dx}\Big)^n}{\Gamma(n-\alpha)}\int_{a}^x \Big(g(x)-g(u)\Big)^{n-\alpha-1}f(u)g'(u)du
 \end{eqnarray}
 \noindent and
\begin{eqnarray}\label{444}\nonumber
 D_b^{\alpha,g}  f(x)&=&\Big(-\frac{1}{g'(x)}\frac{d}{dx}\Big)^n(I_b^{n-\alpha,g} f)(x)\\&=&\frac{\Big(-\frac{1}{g'(x)}\frac{d}{dx}\Big)^n}{\Gamma(n-\alpha)}\int_{a}^x \Big(g(x)-g(u)\Big)^{n-\alpha-1}f(u)g'(u)du,
 \end{eqnarray}
 \noindent where $n=[\alpha]+1$. It is easy to observe that if we choose $g(x)=x$, the integrals 
 in (\ref{3}) and (\ref{333}) becomes the left and right Riemann-Liouville fractional integrals respectively and (\ref{4}) and (\ref{444}) becomes the left and right Riemann-Liouville fractional derivatives. When $g(x)=\ln x$, the Hadamard fractional operators are obtained \cite{Samko,f2}. While if one considers $g(x)=\frac{x^\rho}{\rho}$, the fractional operators in the settings of Katugampola \cite{Kat1,Kat2} are derived. 
 
 In left and right generalized Caputo derivatives of a function with respect to another function are presented respectively as \cite{fahd10}
\begin{equation} \label{555}
 ~_{a}^CD^{\alpha,g}  f(x)=\Big(~_aI^{n-\alpha,g} f^{[n]}\Big)(x)
\end{equation}
\noindent  and 
\begin{equation} \label{666}
 ^CD_b^{\alpha,g}  f(x)=\Big(~_aI^{n-\alpha,g} (-1)^nf^{[n]}\Big)(x),
\end{equation}
\noindent where $\displaystyle  f^{[n]}(x)=\Big(\frac{1}{g'(x)}\frac{d}{dx}\Big)^nf(x)$.

\subsection{The proportional derivatives and their  fractional integrals and derivatives} 
The conformable derivative was first introduced by Khalil et al. in \cite{kh} and then explored by the
current author in \cite{T11}. In his distinctive paper \cite{Anderson1}, Anderson et al. modified the  conformable derivative by using the proportional derivative. Indeed, he gave the following definition.

\begin{definition} \label{D1} \emph{(Modified conformable derivatives)}
For $\rho \in [0,1]$, let the functions $\kappa_0, \kappa_1:[0,1]\times \mathbb{R}\rightarrow [0,\infty)$ be continuous such that for all $t \in \mathbb{R}$ we have $$\lim_{\rho\rightarrow 0^+}\kappa_1(\rho,t)=1,~\lim_{\rho\rightarrow 0^+}\kappa_0(\rho,t)=0, \lim_{\rho\rightarrow 1^-}\kappa_1(\rho,t)=0,~\lim_{\rho\rightarrow 1^-}\kappa_0(\rho,t)=1,$$
and $\kappa_1(\rho,t)\neq 0,~~\rho \in [0,1),~~\kappa_0(\rho,t)\neq 0,~~\rho \in (0,1]$. Then, the modified conformable differential operator of order $\rho$ is defined by
\begin{equation}\label{anndy}
D^\rho f(t)=\kappa_1(\rho,t) f(t)+\kappa_0(\rho,t) f^\prime(t).
\end{equation}
\end{definition}
The derivative given in (\ref{anndy}) is called a proportional derivative. For more details about the control theory of the proportional derivatives and its component functions $\kappa_0$ and $\kappa_1$, we refer the reader to \cite{Anderson1,Anderson2}.

Of special interest, we shall restrict ourselves to the case when $\kappa_1(\rho,t)=1-\rho$ and $\kappa_0(\rho,t)=\rho$.
Therefore, (\ref{anndy}) becomes
\begin{equation}\label{prop derivative}
  D^\rho f(t)=(1-\rho) f(t)+\rho f^\prime(t).
\end{equation}
Notice that $\lim_{\rho \rightarrow 0^+}D^\rho f(t)= f(t)$ and $\lim_{\rho \rightarrow 1^-}D^\rho f(t)= f^\prime(t)$. It is clear that the derivative (\ref{prop derivative})  is somehow more  general than  the conformable derivative which does not tend to the original function as $\rho$ tends to $0$. The associated  fractional proportional integrals are defined as \cite{fahd12}

\begin{definition}\label{left and right integrals}For $\rho>0$ and $\alpha \in \mathbb{C},~~Re(\alpha)>0$, the  left fractional proportional  integral of $f$   reads \begin{equation}\label{LPI}
        (_{a}I^{\alpha,\rho} f)(x)=  \frac{1}{\rho^\alpha \Gamma(\alpha)}\int_a^x e^{\frac{\rho-1}{\rho}(x-\tau)} (x-\tau)^{\alpha-1} f(\tau)d\tau
         \end{equation}
and the right one reads
        \begin{equation}\label{RPI}
          (I_b^{\alpha,\rho} f)(x)=  \frac{1}{\rho^\alpha \Gamma(\alpha)}\int_x^b  e^{\frac{\rho-1}{\rho}(\tau-x)} (\tau-x)^{\alpha-1} f(\tau)d\tau.
           \end{equation}

\end{definition}

\begin{definition}\label{Prop fractional derivatives}\cite{fahd12}
For $\rho>0$ and $\alpha \in \mathbb{C},~~Re(\alpha)\geq 0$, the left fractional proportional derivative is defined as 

\begin{eqnarray}\label{LPD}\nonumber
  (_{a}D^{\alpha,\rho}f)(x)&=& D^{n,\rho} ~_{a}I^{n-\alpha,\rho} f(x)\\&=&\frac{D_x^{n,\rho}}{\rho^{n-\alpha}\Gamma(n-\alpha)}
  \int_a^x e^{\frac{\rho-1}{\rho}(x-\tau)}(x-\tau)^{n-\alpha-1} f(\tau)d \tau.
\end{eqnarray}

 The right proportional fractional  derivative  is defined by \cite{fahd12}
 \begin{eqnarray}\label{RPD}\nonumber
   (D_b^{\alpha,\rho}f)(x)&=& ~_{\ominus}D^{n,\rho} I_b^{n-\alpha,\rho} f(x)\\&=&\frac{~_{\ominus}D^{n,\rho}}{\rho^{n-\alpha}\Gamma(n-\alpha)}
   \int_x^b e^{\frac{\rho-1}{\rho}(\tau-x)}(\tau-x)^{n-\alpha-1} f(\tau)d \tau,
 \end{eqnarray}

\noindent   where $n=[Re(\alpha)]+1$ and $\displaystyle (_{\ominus}D^\rho f)(t)=(1-\rho)f(t)-\rho f^\prime(t)$.

\end{definition}

Lastly, the left and right  fractional proportional derivatives in the Caputo settings respectively read \cite{fahd12}
\begin{eqnarray}\label{LPDC}\nonumber
  (_{a}^CD^{\alpha,\rho}f)(x)&=&  \Big(~_{a}I^{n-\alpha,\rho}D^{n,\rho}f\Big)(x)\\\nonumber &=&\frac{1}{\rho^{n-\alpha}\Gamma(n-\alpha)}
  \int_a^x e^{\frac{\rho-1}{\rho}(x-\tau)}(x-\tau)^{n-\alpha-1} (D^{n,\rho}f)(\tau)d \tau
  \\
\end{eqnarray}
\noindent 
\noindent and 
 \begin{eqnarray}\label{RPDC}\nonumber
   (^CD_b^{\alpha,\rho}f)(x)&=& \Big( I_b^{n-\alpha,\rho}{~_\ominus}D^{n,\rho}f\Big)(x)\\\nonumber &=&\frac{1}{\rho^{n-\alpha}\Gamma(n-\alpha)}
   \int_x^b e^{\frac{\rho-1}{\rho}(\tau-x)}(\tau-x)^{n-\alpha-1} (~_{\ominus}D^{n,\rho}f)(\tau)d\tau.\\
 \end{eqnarray}

\section{The fractional proportional derivative of a function with respect to another function}
\begin{definition} \label{D2} \emph{(The proportional derivative of a function with respect to anothor function)}\\
For $\rho \in [0,1]$, let the functions $\kappa_0, \kappa_1:[0,1]\times \mathbb{R}\rightarrow [0,\infty)$ be continuous such that for all $t \in \mathbb{R}$ we have $$\lim_{\rho\rightarrow 0^+}\kappa_1(\rho,t)=1,~\lim_{\rho\rightarrow 0^+}\kappa_0(\rho,t)=0, \lim_{\rho\rightarrow 1^-}\kappa_1(\rho,t)=0,~\lim_{\rho\rightarrow 1^-}\kappa_0(\rho,t)=1,$$
and $\kappa_1(\rho,t)\neq 0,~~\rho \in [0,1),~~\kappa_0(\rho,t)\neq 0,~~\rho \in (0,1]$. Let also $g(t)$ be a strictly  increasing continuous function.  Then, the proportional  differential operator of order $\rho$ of $f$ with respect to $g$ is defined by
\begin{equation}\label{eq1}
D^{\rho,g} f(t)=\kappa_1(\rho,t) f(t)+\kappa_0(\rho,t)\frac{f^\prime(t)}{g'(t)}.
\end{equation}
\end{definition}
 we shall restrict ourselves to the case when $\kappa_1(\rho,t)=1-\rho$ and $\kappa_0(\rho,t)=\rho$.
Therefore, (\ref{eq1}) becomes
\begin{equation}\label{eq2}
  D^{\rho,g} f(t)=(1-\rho) f(t)+\rho \frac{f^\prime(t)}{g'(t)}.
\end{equation}
The corresponding integral of \eqref{eq2}

\begin{equation}\label{eq3}
_{a}I^{1,\rho,g}f(t)=\frac{1}{\rho}\int_a^t e^{\frac{\rho-1}{\rho}(g(t)-g(s))}f(s)g'(s)ds,
\end{equation}
where we accept that $~_{a}I^{0,\rho}f(t)=f(t)$.

To produce a generalized type fractional integral depending on the proportional
  derivative, we proceed by induction through changing the order of integrals to show that
 \begin{eqnarray} \label{eq4}
 \nonumber
   (_{a}I^{n,\rho,g} f)(t) &=& \frac{1}{\rho}\int_a^t e^{\frac{\rho-1}{\rho}(g(t)-g(\tau_1))} g'(\tau_1)d \tau_1 \frac{1}{\rho} \int_a^{\tau_1}
   e^{\frac{\rho-1}{\rho}(g(\tau_1)-g(\tau_2))}g'(\tau_2)d \tau_2\cdot\cdot\cdot\\\nonumber&\cdot\cdot\cdot&\frac{1}{\rho} \int_a^{\tau_{n-1}} e^{\frac{\rho-1}{\rho}(g(\tau_{n-1})-g(\tau_n))}f(\tau_n)g'(\tau_n) d\tau_n\\
    &=&  \frac{1}{\rho^n \Gamma(n)}\int_a^t  e^{\frac{\rho-1}{\rho}(g(t)-g(\tau))} (g(t)-g(\tau))^{n-1} f(\tau)g'(\tau)d\tau.
 \end{eqnarray}
 Based on (\ref{eq4}), we can present the following general proportional  fractional integral.

\begin{definition}\label{general left and right integrals}For $\rho \in (0,1]$, $\alpha \in \mathbb{C},~~Re(\alpha)>0$, we define the left fractional  integral of $f$ with respect to $g$  by 
\begin{equation} \label{eq5}
        (_{a}I^{\alpha,\rho,g} f)(t)=  \frac{1}{\rho^\alpha \Gamma(\alpha)}\int_a^t  e^{\frac{\rho-1}{\rho}(g(t)-g(\tau))} (g(t)-g(\tau))^{\alpha-1} f(\tau)g'(\tau)d\tau.
         \end{equation}
The right fractional proportional integral ending at $b$ can be defined by
 \begin{equation}\label{eq6}
          (I_b^{\alpha,\rho,g} f)(t)=  \frac{1}{\rho^\alpha \Gamma(\alpha)}\int_t^b  e^{\frac{\rho-1}{\rho}(g(\tau)-g(t))} (g(\tau)-g(t))^{\alpha-1} f(\tau)g'(\tau)d\tau.
\end{equation}
\end{definition}

\begin{remark} To deal with the right proportional  fractional case we shall use the notation $$(_{\ominus}D^{\rho,g} f)(t):=(1-\rho)f(t)-\rho \frac{f^\prime(t)}{g'(t)}.$$
We shall also write $$(_{\ominus}D^{n,\rho,g} f)(t)= (\underbrace{_{\ominus}D^{\rho,g}~ _{\ominus}D^{\rho,g}\ldots~_{\ominus}D^{\rho,g}}_{\texttt{n times}} f)(t).$$
\end{remark}

\begin{definition}\label{general left and right derivatives}
For $\rho>0$, $\alpha \in \mathbb{C},~~Re(\alpha)\geq 0$ and $g\in C[a,b]$, where $g'(t)>0$,  we define the  general left fractional derivative of $f$ with respect to $g$ as

\begin{eqnarray}\label{eq7}\nonumber
  (_{a}D^{\alpha,\rho,g}f)(t)&=& D^{n,\rho,g} ~_{a}I^{n-\alpha,\rho,g} f(t)\\\nonumber
  &=&\frac{D_t^{n,\rho,g}}{\rho^{n-\alpha}\Gamma(n-\alpha)}
  \int_a^t e^{\frac{\rho-1}{\rho}(g(t)-g(\tau))}(g(t)-g(\tau))^{n-\alpha-1} f(\tau)g'(\tau)d \tau\\   
\end{eqnarray}

 and the  general right fractional derivative of $f$ with respect to $g$ as
 \begin{eqnarray}\label{eq8}\nonumber
   (D_b^{\alpha,\rho,g}f)(t)&=& ~_{\ominus}D^{n,\rho,g} I_b^{n-\alpha,\rho,g} f(t)\\\nonumber
   &=&\frac{~_{\ominus}D_t^{n,\rho,g}}{\rho^{n-\alpha}\Gamma(n-\alpha)}
   \int_t^b e^{\frac{\rho-1}{\rho}(g(\tau)-g(t))}(g(\tau)-g(t))^{n-\alpha-1} f(\tau)g'(\tau)d \tau,\\
 \end{eqnarray}

\noindent   where $n=[Re(\alpha)]+1$.

\end{definition}

   \begin{remark}\label{reduction}
   
   Clearly, if we let $\rho=1$ in Definition \ref{general left and right integrals} and Definition \ref{general left and right derivatives}, we obtain the 
\begin{itemize}
\item the Riemann-Liouville fractional operators  \eqref{001}, \eqref{002},\eqref{003} and \eqref{004} if $g(t)=t$.    
\item the fractional operators in the Katugampola setting\eqref{015}, \eqref{016}, \eqref{017} and \eqref{018}  if $\displaystyle g(t)=\frac{t^{\mu}}{\mu}$.
\item The Hadamard fractional operators if $g(t)=\ln t$ \cite{Samko,f2}.
\item The fractional  operators mentioned in \cite{fahd11} if $\displaystyle g(t)=\frac{(t-a)^{\mu}}{\mu}$.
\end{itemize}
\end{remark}  

\begin{proposition} \label{2.4}
Let $\alpha, \beta \in \mathbb{C}$ be such that $Re(\alpha)\geq 0$ and $Re(\beta)>0$. Then, for any $\rho>0$ we have
\begin{itemize}
  \item (a) ~$\big(_{a}I^{\alpha,\rho,g} e^{\frac{\rho-1}{\rho}g(x)} (g(x)-g(a))^{\beta-1}\big)(t)=\frac{\Gamma(\beta)}{\Gamma(\beta+\alpha)\rho^\alpha}e^{\frac{\rho-1}{\rho}g(t)}(g(t)-g(a))^{\alpha+\beta-1},$\\$~~~Re(\alpha)>0.$
  \item (b)~ $\big(I_b^{\alpha,\rho,g} e^{-\frac{\rho-1}{\rho}g(x)} (g(b)-g(x))^{\beta-1}\big)(t)=\frac{\Gamma(\beta)}{\Gamma(\beta+\alpha)\rho^\alpha}e^{-\frac{\rho-1}{\rho}g(t)}(g(b)-g(t))^{\alpha+\beta-1},$\\$~~~Re(\alpha)>0.$
  \item (c)~ $\big(_{a}D^{\alpha,\rho} e^{\frac{\rho-1}{\rho}g(x)} (g(x)-g(a))^{\beta-1}\big)(t)=\frac{\rho^\alpha\Gamma(\beta)}{\Gamma(\beta-\alpha)}e^{\frac{\rho-1}{\rho}g(t)}(g(t)-g(a))^{\beta-1-\alpha},$\\$~~~Re(\alpha)\geq 0.$
    \item (d)~$\big(D_b^{\alpha,\rho,g} e^{-\frac{\rho-1}{\rho}g(x)} (g(b)-g(x))^{\beta-1}\big)(t)=\frac{\rho^\alpha\Gamma(\beta)}{\Gamma(\beta-\alpha)}e^{-\frac{\rho-1}{\rho}g(t)}(g(b)-G
    g(t))^{\beta-1-\alpha},$\\$~~~Re(\alpha)\geq 0.$
\end{itemize}
\end{proposition}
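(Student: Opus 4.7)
The four parts split naturally into two integral identities (a),(b) and two derivative identities (c),(d); the derivative parts will be derived from the integral parts together with an "eigenvalue-type" lemma for $D^{\rho,g}$ on exponential–power functions. My plan is the following.

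For part (a), I would just substitute $f(\tau)=e^{\frac{\rho-1}{\rho}g(\tau)}(g(\tau)-g(a))^{\beta-1}$ into the definition \eqref{eq5}. The crucial and deliberate choice of exponential weight makes the two exponentials telescope:
\begin{equation*}
e^{\frac{\rho-1}{\rho}(g(t)-g(\tau))}\,e^{\frac{\rho-1}{\rho}g(\tau)}=e^{\frac{\rho-1}{\rho}g(t)},
\end{equation*}
so $e^{\frac{\rho-1}{\rho}g(t)}$ pulls outside the integral. What is left is a generalized Beta-type integral that I would evaluate by the change of variable $u=(g(\tau)-g(a))/(g(t)-g(a))$, which sends $d\tau\,g'(\tau)$ to $(g(t)-g(a))\,du$ and reduces the integrand to $u^{\beta-1}(1-u)^{\alpha-1}$; the classical identity $B(\beta,\alpha)=\Gamma(\beta)\Gamma(\alpha)/\Gamma(\alpha+\beta)$ finishes (a). Part (b) is symmetric: the same exponential cancellation occurs (now with the opposite sign), and the substitution $u=(g(b)-g(\tau))/(g(b)-g(t))$ again reduces the integral to a Beta function, yielding (b).

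For parts (c) and (d), I would first prove the following one-line lemma by direct computation from $D^{\rho,g}f=(1-\rho)f+\rho f'/g'$:
\begin{equation*}
D^{\rho,g}\!\left[e^{\frac{\rho-1}{\rho}g(t)}(g(t)-g(a))^{\gamma}\right]=\rho\,\gamma\,e^{\frac{\rho-1}{\rho}g(t)}(g(t)-g(a))^{\gamma-1}.
\end{equation*}
The reason this works is that the derivative of the exponential factor contributes exactly $-(1-\rho)/\rho\cdot g'$, which after being multiplied by $\rho/g'$ cancels the $(1-\rho)f$ term. Iterating $n$ times gives a clean "power rule"
\begin{equation*}
D^{n,\rho,g}\!\left[e^{\frac{\rho-1}{\rho}g(t)}(g(t)-g(a))^{\gamma}\right]=\rho^{n}\frac{\Gamma(\gamma+1)}{\Gamma(\gamma-n+1)}e^{\frac{\rho-1}{\rho}g(t)}(g(t)-g(a))^{\gamma-n}.
\end{equation*}
Now I apply part (a) with $\alpha$ replaced by $n-\alpha$ (where $n=[\mathrm{Re}(\alpha)]+1$) to compute $_{a}I^{n-\alpha,\rho,g}$ acting on the input, then hit the result with $D^{n,\rho,g}$ using the lemma above (with $\gamma=n-\alpha+\beta-1$). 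The factor $\rho^{n}$ combines with $\rho^{-(n-\alpha)}$ to give $\rho^{\alpha}$, and the Gamma factors collapse via $\Gamma(\beta+n-\alpha)/\Gamma(\beta-\alpha)$, producing exactly the right-hand side of (c). Part (d) is the mirror image: one checks by the same direct differentiation that $_{\ominus}D^{\rho,g}$ acts on $e^{-\frac{\rho-1}{\rho}g(t)}(g(b)-g(t))^{\gamma}$ again as multiplication by $\rho\gamma$ with exponent decreased by one (the two sign flips in $-f'/g'$ and in $-d/dt(g(b)-g(t))^{\gamma}$ conspire correctly), and then part (b) combined with the definition \eqref{eq8} gives (d).

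The only place requiring care is the "eigenvalue" lemma: one must verify that the signs truly cancel so that the $(1-\rho)f$ term disappears; once this is established the rest is a routine bookkeeping of $\Gamma$ factors and powers of $\rho$. No convergence issue arises because $\mathrm{Re}(\beta)>0$ guarantees integrability at $\tau=a$ (resp.\ $\tau=b$) for (a),(b), and the finitely many differentiations in (c),(d) are justified pointwise on $(a,b)$ since $g$ is smooth and strictly increasing.
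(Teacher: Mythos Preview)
Your proposal is correct and follows essentially the same approach as the paper: the paper also declares (a),(b) to be straightforward and proves (c) by first applying (a) with $\alpha$ replaced by $n-\alpha$ and then applying $D^{n,\rho,g}$ via the identity $D^{\rho,g}\big(h(t)e^{\frac{\rho-1}{\rho}g(t)}\big)=\rho\,\frac{h'(t)}{g'(t)}\,e^{\frac{\rho-1}{\rho}g(t)}$, which is exactly your ``eigenvalue lemma'' specialized to $h(t)=(g(t)-g(a))^{\gamma}$. Your write-up is in fact more detailed than the paper's, since you spell out the Beta-function substitution for (a),(b) and the $\Gamma$/$\rho$ bookkeeping explicitly.
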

\begin{proof}
The proofs of relations (a) and (b) are very easy to handle. We will prove  (c) while the proof of  (d) is analogous.

By the definition of the left proportional fractional  derivative and relation (a), we have
\begin{align*}
&\Big(_{a}D^{\alpha,\rho,g} e^{\frac{\rho-1}{\rho}g(x)} (g(x)-g(a))^{\beta-1}\Big)(t)\\ 
&=D^{n,\rho,g} \Big(_{a}I^{n-\alpha,\rho,g} e^{\frac{\rho-1}{\rho}g(x)} (g(x)-g(a))^{\beta-1}\Big)(t)\\
&=D^{n,\rho,g}\frac{\Gamma(\beta)}{\Gamma(\beta+n-\alpha)\rho^{n-\alpha}}e^{\frac{\rho-1}{\rho}g(t)}(g(t)-g(a))^{n-\alpha+\beta-1} \\
&=\frac{\rho^n\Gamma(\beta)(n-\alpha+\beta-1)(n-\alpha+\beta-1)\cdot \cdot \cdot
   (\beta-\alpha)}{\rho^{n-\alpha}\Gamma(n-\alpha+\beta)}
\times e^{\frac{\rho-1}{\rho}g(t)}(g(t)-g(a))^{\beta-1-\alpha}\\
& =\frac{\rho^\alpha\Gamma(\beta)}{\Gamma(\beta-\alpha)}e^{\frac{\rho-1}{\rho}g(t)}(g(t)-g(a))^{\beta-1-\alpha}.
\end{align*}
Here, we have used the fact that $\displaystyle D^{\rho,g} \Big(h(t)  e^{\frac{\rho-1}{\rho}g(t)} \Big)=\rho \frac{h'(t)}{g^\prime (t)}  e^{\frac{\rho-1}{\rho}g(t)} $. 
\end{proof}

Below we present the semi--group property for  the general fractional proportional integrals of  a function with respect to another function.

\begin{theorem} \label{THM1} 
Let $\rho\in (0,1],~Re(\alpha)>0$ and $Re(\beta)>0$. Then, if $f$ is continuous and defined for $t \geq a$ or $t\le b$, we have
\begin{equation} \label{Left Semi integrals}
  ~_aI^{\alpha,\rho,g} (_{a}I^{\beta,\rho,g} f)(t)= ~_aI^{\beta,\rho,g} (_{a}I^{\alpha,\rho} f)(t)=(~_{a}I^{\alpha+\beta,\rho,g} f)(t)
\end{equation}
\noindent and 
\begin{equation}\label{Right Semi integrals}
  I_b^{\alpha,\rho,g} (I_b^{\beta,\rho,g} f)(t)= ~I_b^{\beta,\rho,g} (I_b^{\alpha,\rho} f)(t)=(I_b^{\alpha+\beta,\rho,g} f)(t).
\end{equation}
\end{theorem}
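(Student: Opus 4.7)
The plan is to prove the left identity \eqref{Left Semi integrals} by writing the composition as a double integral, swapping the order of integration (Dirichlet/Fubini), and reducing the inner integral to the Beta function via a linear substitution in the variable $g(s)$. The right identity \eqref{Right Semi integrals} follows by a symmetric argument, so I would only write out the left case in detail and indicate the needed change of variables for the right case.

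First I would insert the definition \eqref{eq5} twice to obtain
\begin{align*}
\bigl(~_aI^{\alpha,\rho,g}\,_aI^{\beta,\rho,g} f\bigr)(t)
&=\frac{1}{\rho^{\alpha+\beta}\Gamma(\alpha)\Gamma(\beta)}\int_a^t\!\!\int_a^s
 e^{\frac{\rho-1}{\rho}(g(t)-g(s))}(g(t)-g(s))^{\alpha-1}\\
&\quad\times e^{\frac{\rho-1}{\rho}(g(s)-g(\tau))}(g(s)-g(\tau))^{\beta-1}f(\tau)g'(\tau)g'(s)\,d\tau\,ds.
\end{align*}
The two exponentials combine to $e^{\frac{\rho-1}{\rho}(g(t)-g(\tau))}$, which is independent of $s$, so the kernel splits cleanly. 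Continuity of $f$, $g$, $g'$ together with the local integrability of the kernel singularities ensure the hypotheses of Fubini's theorem on the triangular domain $\{a\le\tau\le s\le t\}$, so I may exchange the order to integrate $s$ from $\tau$ to $t$ and $\tau$ from $a$ to $t$.

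The inner integral, for fixed $\tau$, becomes
$$\int_\tau^t (g(t)-g(s))^{\alpha-1}(g(s)-g(\tau))^{\beta-1}g'(s)\,ds,$$
and the natural substitution is $u=\tfrac{g(s)-g(\tau)}{g(t)-g(\tau)}$, which gives $g'(s)\,ds=(g(t)-g(\tau))\,du$, $g(t)-g(s)=(1-u)(g(t)-g(\tau))$ and $g(s)-g(\tau)=u(g(t)-g(\tau))$, reducing the integral to $(g(t)-g(\tau))^{\alpha+\beta-1}B(\alpha,\beta)=(g(t)-g(\tau))^{\alpha+\beta-1}\Gamma(\alpha)\Gamma(\beta)/\Gamma(\alpha+\beta)$. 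Substituting back and comparing with \eqref{eq5} for the exponent $\alpha+\beta$ yields $\bigl(~_aI^{\alpha+\beta,\rho,g}f\bigr)(t)$. Commutativity in $\alpha,\beta$ is immediate since the right--hand side is symmetric in $\alpha,\beta$.

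For \eqref{Right Semi integrals}, the analogous computation uses \eqref{eq6}; after applying Fubini on $\{t\le s\le\tau\le b\}$, the inner integral $\int_t^\tau(g(s)-g(t))^{\alpha-1}(g(\tau)-g(s))^{\beta-1}g'(s)\,ds$ is handled by $u=\tfrac{g(s)-g(t)}{g(\tau)-g(t)}$ to recover the same Beta factor. I do not foresee any real obstacle; the only point that requires care is checking integrability near the two endpoints of the inner interval, i.e.\ that $\mathrm{Re}(\alpha),\mathrm{Re}(\beta)>0$ is enough to make both $(g(t)-g(s))^{\alpha-1}$ and $(g(s)-g(\tau))^{\beta-1}$ locally integrable so that Fubini applies; this is essentially the same verification as in the classical Riemann--Liouville case ($\rho=1$, $g(t)=t$), since the exponential factor is bounded on the compact interval $[a,t]$.
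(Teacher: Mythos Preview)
Your proposal is correct and follows essentially the same route as the paper: write the composition as a double integral, combine the exponential factors, interchange the order of integration, and evaluate the inner integral via the substitution $u=\dfrac{g(s)-g(\tau)}{g(t)-g(\tau)}$ to obtain the Beta function. The paper treats only the left case and says the right case is analogous, exactly as you propose; if anything, your version is slightly more careful in justifying the use of Fubini.
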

\begin{proof} We will prove \eqref{Left Semi integrals}. \eqref{Right Semi integrals} is proved similarly. Using the definition, interchanging the order and making the change of variable $z=\frac{g(u)-g(\tau)}{g(t)-g(\tau)}$, we get

\begin{align*}
 &~_aI^{\alpha,\rho,g} (_{a}I^{\beta,\rho,g} f)(t)\\
  &= \frac{1}{\rho^{\alpha+\beta}\Gamma(\alpha)\Gamma(\beta)}
  \int_a^t \int_a^ue^{\frac{\rho-1}{\rho}(g(t)-g(u))}e^{\frac{\rho-1}{\rho}(g(u)-g(\tau))}(g(t)-g(u))^{\alpha-1}\\
   & \times(g(u)-g(\tau))^{\beta-1}f(\tau)g'(\tau)d\tau g'(u)du\\
   &=\frac{1}{\rho^{\alpha+\beta}\Gamma(\alpha)\Gamma(\beta)}\int_a^t e^{\frac{\rho-1}{\rho}(g(t)-g(\tau))} f(\tau) \int_\tau^t
  (g(t)-g(u))^{\alpha-1} (g(u)-g(\tau))^{\beta-1}\\
  &\times g'(u)dug'(\tau) d\tau \\
   &= \frac{1}{\rho^{\alpha+\beta}\Gamma(\alpha)\Gamma(\beta)}
   \int_a^t e^{\frac{\rho-1}{\rho}(g(t)-g(\tau))}(g(t)-g(\tau))^{\alpha+\beta-1} f(\tau)g'(u)d\tau \\
   &\times \int_0^1 (1-z)^{\alpha-1} z^{\beta-1} dz\\
    &=\frac{1}{\rho^{\alpha+\beta}\Gamma(\alpha+\beta)}\int_a^t e^{\frac{\rho-1}{\rho}(g(t)-g(\tau))}(g(t)-g(\tau))^{\alpha+\beta-1} f(\tau)g'(\tau)d\tau\\
    &=(_{a}I^{\alpha+\beta,\rho} f)(t).
\end{align*}
\end{proof}

\begin{theorem}\label{THM2}Let $0\leq  m< [Re(\alpha)]+1$. Then, we have
\begin{equation}\label{D on L}
  D^{m,\rho,g} (_{a}I^{\alpha,\rho,g}f)(t)=(_{a}I^{\alpha-m,\rho,g}f)(t)
\end{equation}
and 
\begin{equation}\label{LD on L}
  ~_{\ominus}D^{m,\rho,g} (I_b^{\alpha,\rho,g}f)(t)=(I_b^{\alpha-m,\rho,g}f)(t)
\end{equation}
\end{theorem}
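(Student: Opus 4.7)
The plan is to prove the left-sided identity (\ref{D on L}) by induction on $m$, with the crux being the $m=1$ case, after which the higher $m$ cases follow by iterating $D^{\rho,g}$. The right-sided identity (\ref{LD on L}) is entirely analogous, with $_\ominus D^{\rho,g}$ playing the role of $D^{\rho,g}$ and an obvious sign change in the exponent, so I would only write one proof in detail. The case $m=0$ is trivial since $_aI^{0,\rho,g}$ is the identity by convention.

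For $m=1$, the key observation is that the kernel factors as $e^{\frac{\rho-1}{\rho}(g(t)-g(\tau))} = e^{\frac{\rho-1}{\rho}g(t)}\, e^{-\frac{\rho-1}{\rho}g(\tau)}$, which lets us write
\[
(_aI^{\alpha,\rho,g}f)(t) = e^{\frac{\rho-1}{\rho}g(t)}\,G(t), \qquad G(t) = \frac{1}{\rho^\alpha \Gamma(\alpha)}\int_a^t (g(t)-g(\tau))^{\alpha-1}\tilde f(\tau)\,g'(\tau)\,d\tau,
\]
with $\tilde f(\tau):= e^{-\frac{\rho-1}{\rho}g(\tau)}f(\tau)$. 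Applying $D^{\rho,g} = (1-\rho) + \rho\,\frac{1}{g'}\frac{d}{dt}$ and using $\frac{d}{dt}e^{\frac{\rho-1}{\rho}g(t)} = \frac{\rho-1}{\rho}g'(t)\,e^{\frac{\rho-1}{\rho}g(t)}$, the $(1-\rho)$-term exactly cancels the contribution from differentiating the exponential factor, leaving
\[
D^{\rho,g}\bigl(e^{\frac{\rho-1}{\rho}g(t)} G(t)\bigr) = \rho\, e^{\frac{\rho-1}{\rho}g(t)}\,\frac{G'(t)}{g'(t)}.
\]
For $Re(\alpha)>1$ the boundary term at $\tau=t$ vanishes when differentiating $G$ under the integral, and a direct computation gives $G'(t)/g'(t) = \rho^{-\alpha}\Gamma(\alpha-1)^{-1}\int_a^t(g(t)-g(\tau))^{\alpha-2}\tilde f(\tau)g'(\tau)d\tau$; reabsorbing the exponential exactly reproduces $(_aI^{\alpha-1,\rho,g}f)(t)$. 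The edge case $\alpha = 1$ is handled by the same factorization: there $G'(t)/g'(t) = \rho^{-1}\tilde f(t)$, so $D^{\rho,g}(_aI^{1,\rho,g}f)(t) = f(t)$, which agrees with $(_aI^{0,\rho,g}f)(t)$.

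The induction step is then immediate: assuming the identity at level $m-1$,
\[
D^{m,\rho,g}(_aI^{\alpha,\rho,g}f)(t) = D^{\rho,g}\bigl(D^{m-1,\rho,g}(_aI^{\alpha,\rho,g}f)\bigr)(t) = D^{\rho,g}(_aI^{\alpha-m+1,\rho,g}f)(t) = (_aI^{\alpha-m,\rho,g}f)(t),
\]
where the last equality is the $m=1$ base case applied at order $\alpha-m+1$. The hypothesis $m \le [Re(\alpha)]$ ensures $Re(\alpha-m+1) \ge 1$, so the base case is always available.

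The only genuinely delicate point is managing the boundary behavior when differentiating under the integral sign in $G'(t)$; this is why the integer edge case (when $\alpha-m+1 = 1$) must be treated separately rather than absorbed into the $Re(\alpha)>1$ argument. Beyond that, the proof is routine product-rule bookkeeping, and the right-sided identity follows by replacing $D^{\rho,g}$ with $_\ominus D^{\rho,g}$ and reversing the orientation: the factorization $e^{\frac{\rho-1}{\rho}(g(\tau)-g(t))} = e^{-\frac{\rho-1}{\rho}g(t)} e^{\frac{\rho-1}{\rho}g(\tau)}$ pulls the opposite exponential outside the right integral, and $_\ominus D^{\rho,g}$ annihilates it in exactly the same way.
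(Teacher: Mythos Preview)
Your proof is correct and follows essentially the same route as the paper: both arguments hinge on the fact that $D^{\rho,g}$ annihilates the exponential factor $e^{\frac{\rho-1}{\rho}(g(t)-g(\tau))}$, so one application of $D^{\rho,g}$ to $_aI^{\alpha,\rho,g}f$ simply lowers the power in the kernel and yields $_aI^{\alpha-1,\rho,g}f$, after which both proofs iterate $m$ times. Your version is more explicit about the factorization and the product-rule cancellation (and you are more careful about the boundary term and the integer edge case), whereas the paper compresses this into a single line invoking $D_t^{\rho,g}e^{\frac{\rho-1}{\rho}(g(t)-g(\tau))}=0$, but the underlying mechanism is identical.
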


\begin{proof}
 Here we  prove \eqref{D on L}, while one can prove \eqref{LD on L} likewise. Using the fact that  $D_t^{\rho,g}e^{\frac{\rho-1}{\rho}(g(t)-g(\tau))} =0$), we have
\begin{align*}
 & D^{m,\rho,g} (_{a}I^{\alpha,\rho,g}f)(t) D^{m-1,\rho,g} (D^{\rho,g}~_{a}I^{\alpha,\rho,g}f)(t) \\
   &= D^{m-1,\rho,g} \frac{1}{\rho^{\alpha-1}\Gamma(\alpha-1)}\int_a^t e^{\frac{\rho-1}{\rho}(g(t)-g(\tau))} (g(t)-g(\tau))^{\alpha-2}f(\tau)g'(\tau)d \tau.
\end{align*}
Proceeding  $m-$times in the same manner  we obtain  (\ref{D on L}).
\end{proof}

\begin{corollary}\label{D on I}
Let $0<Re(\beta) < Re(\alpha)$ and $m-1<Re(\beta)\leq m$. Then, we have 
\begin{equation} \label{LD on LI}
_{a}D^{\beta, \rho,g} ~_{a}I^{\alpha,\rho,g} f(t)=~_{a}I^{\alpha-\beta,\rho,g} f(t)
\end{equation}
and 
\begin{equation} \label{RD on RI}
D_b^{\beta, \rho,g} I_b^{\alpha,\rho,g} f(t)=I_b^{\alpha-\beta,\rho,g} f(t).
\end{equation}
\end{corollary}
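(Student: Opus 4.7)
The plan is to derive both identities by directly unfolding the definition of the fractional proportional derivative and then reducing to the two tools already established: the semigroup law for the integrals (Theorem \ref{THM1}) and the action of the integer-order proportional derivative on the fractional integral (Theorem \ref{THM2}). I will carry out the argument for \eqref{LD on LI}; the right-sided identity \eqref{RD on RI} follows by an identical computation with $_{\ominus}D^{\rho,g}$ in place of $D^{\rho,g}$ and the right integrals.

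First, since $m-1<Re(\beta)\le m$, the integer $n$ appearing in Definition \ref{general left and right derivatives} applied to the order $\beta$ is exactly $n=m$. Therefore, unwinding the definition gives
\[
_{a}D^{\beta,\rho,g}\,_{a}I^{\alpha,\rho,g} f(t) \;=\; D^{m,\rho,g}\bigl(\,_{a}I^{m-\beta,\rho,g}\,_{a}I^{\alpha,\rho,g} f\bigr)(t).
\]
Next I would apply the semigroup property \eqref{Left Semi integrals} of Theorem \ref{THM1} to the composition of the two fractional integrals. Since $Re(m-\beta)\ge 0$ and $Re(\alpha)>0$, the hypotheses needed to iterate the semigroup (after an absorption of the $m=0$ case as the identity convention $\,_aI^{0,\rho,g}f=f$) are met, and we obtain
\[
_{a}I^{m-\beta,\rho,g}\,_{a}I^{\alpha,\rho,g} f(t)\;=\;_{a}I^{\alpha+m-\beta,\rho,g} f(t).
\]

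Finally, I would invoke Theorem \ref{THM2} with exponent $\alpha+m-\beta$ in place of $\alpha$. The hypothesis $0\le m<[Re(\alpha+m-\beta)]+1$ is verified from the assumption $Re(\beta)<Re(\alpha)$, which guarantees $Re(\alpha+m-\beta)>m$; hence the theorem applies and yields
\[
D^{m,\rho,g}\bigl(\,_{a}I^{\alpha+m-\beta,\rho,g} f\bigr)(t)\;=\;_{a}I^{\alpha+m-\beta-m,\rho,g}f(t)\;=\;_{a}I^{\alpha-\beta,\rho,g}f(t),
\]
which is the required identity \eqref{LD on LI}. Substituting this chain of equalities into the first display completes the proof.

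The main obstacle I foresee is purely bookkeeping: checking that the index conditions line up (namely that $n=m$ for the derivative of order $\beta$, and that $m<[Re(\alpha+m-\beta)]+1$ so that Theorem \ref{THM2} is applicable), because nothing deeper than the semigroup law and the commutation $D^{m,\rho,g}\,_aI^{m,\rho,g}=\mathrm{id}$ (implicit in Theorem \ref{THM2}) is needed. No new computation on the kernel $e^{\frac{\rho-1}{\rho}(g(t)-g(\tau))}$ is necessary, since these structural identities already absorb it.
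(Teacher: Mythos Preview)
Your proposal is correct and follows essentially the same route as the paper: unfold the definition of $_{a}D^{\beta,\rho,g}$ as $D^{m,\rho,g}\,_{a}I^{m-\beta,\rho,g}$, apply the semigroup property (Theorem \ref{THM1}) to merge the two integrals into $\,_{a}I^{m-\beta+\alpha,\rho,g}$, and then use Theorem \ref{THM2} to strip off the $D^{m,\rho,g}$, yielding $\,_{a}I^{\alpha-\beta,\rho,g}f$. The paper's proof is exactly this three-line computation, with the right-sided case dismissed as analogous; your additional verification of the index conditions is a welcome bit of care that the paper omits.
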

\begin{proof}
By the help of Theorem \ref{THM1} and Theorem \ref{THM2}, we have
\begin{eqnarray*}
  ~_{a}D^{\beta, \rho,g} ~_{a}I^{\alpha,\rho,g} f(t)&=& D^{m,\rho,g} _{a}I^{m-\beta,\rho,g} _{a}I^{\alpha,\rho,g}f(t)\\
   &=& D^{m,\rho,g}~_{a}I^{m-\beta+\alpha,\rho,g} f(t)=~_{a}I^{\alpha-\beta,\rho,g} f(t).
\end{eqnarray*}
This was the proof of \eqref{LD on LI}. One can prove \eqref{RD on RI} in a similar way.
\end{proof}

\begin{theorem} \label{THM4}
Let $f$ be integrable on $t\geq a$ or $t\le b$ and $Re[\alpha]>0, ~\rho \in (0,1],~~n=[Re(\alpha)]+1$. Then, we have 
\begin{equation}\label{LD on LI sameorder}
~_{a}D^{\alpha, \rho,g} ~_{a}I^{\alpha, \rho,g} f(t)=f(t)
\end{equation}
\noindent and 
\begin{equation}\label{RD on RI sameorder}
D_b^{\alpha, \rho,g} I_b^{\alpha, \rho,g} f(t)=f(t).
\end{equation}

\end{theorem}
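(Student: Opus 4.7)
The plan is a three-step reduction. First, by Definition \ref{general left and right derivatives}, with $n=[Re(\alpha)]+1$, I would unfold the left-hand side of \eqref{LD on LI sameorder} as
\begin{equation*}
\bigl({_aD^{\alpha,\rho,g}}\,{_aI^{\alpha,\rho,g}}f\bigr)(t)=D^{n,\rho,g}\bigl({_aI^{n-\alpha,\rho,g}}\,{_aI^{\alpha,\rho,g}}f\bigr)(t).
\end{equation*}
Since both $Re(n-\alpha)>0$ and $Re(\alpha)>0$, the left semigroup law (Theorem \ref{THM1}) applies and collapses the two nested integrals into ${_aI^{n,\rho,g}}f$. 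The task thus reduces to verifying the integer-order identity $D^{n,\rho,g}\,{_aI^{n,\rho,g}}f=f$.

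For this last step I would invoke Theorem \ref{THM2} with its parameter $\alpha$ chosen to be $n$ and $m=n$; the constraint $0\le m<[Re(n)]+1=n+1$ is satisfied, so \eqref{D on L} yields $D^{n,\rho,g}\,{_aI^{n,\rho,g}}f={_aI^{0,\rho,g}}f$, which equals $f$ by the convention ${_aI^{0,\rho}}f=f$ announced just after \eqref{eq3}. Alternatively, one can iterate Theorem \ref{THM2} with $m=1$ a total of $n-1$ times to reach $D^{\rho,g}\,{_aI^{1,\rho,g}}f$ and finish by a direct base case: writing ${_aI^{1,\rho,g}}f(t)=\frac{1}{\rho}e^{\frac{\rho-1}{\rho}g(t)}\int_a^t e^{-\frac{\rho-1}{\rho}g(\tau)}f(\tau)g'(\tau)d\tau$ and applying $D^{\rho,g}$ via Leibniz, the $(1-\rho)$-contribution and the contribution from differentiating the outer exponential cancel and the boundary term at $\tau=t$ supplies $f(t)$. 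This tiny computation is the only place that requires actual calculation, and the cancellation is exactly the same one, $D^{\rho,g}_t e^{\frac{\rho-1}{\rho}(g(t)-g(\tau))}=0$, that underlies the proof of Theorem \ref{THM2}.

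The right-sided identity \eqref{RD on RI sameorder} proceeds along the same track, now using $_\ominus D^{\rho,g}$ in place of $D^{\rho,g}$, the right semigroup identity \eqref{Right Semi integrals}, and the companion identity \eqref{LD on L} of Theorem \ref{THM2}. The sign reversal in $_\ominus D^{\rho,g}$ is precisely what makes it annihilate $e^{\frac{\rho-1}{\rho}(g(\tau)-g(t))}$ as a function of $t$, so that the mirror of the left-sided argument goes through verbatim once the orientation and the sign of the derivative are both flipped. The main obstacle in the whole proof is the single base identity $D^{\rho,g}\,{_aI^{1,\rho,g}}f=f$; everything else is bookkeeping built on the semigroup law and Theorem \ref{THM2}.
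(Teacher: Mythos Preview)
Your proof is correct and follows essentially the same approach as the paper: unfold the definition, apply the semigroup law (Theorem \ref{THM1}) to collapse $_aI^{n-\alpha,\rho,g}\,{_aI^{\alpha,\rho,g}}$ into $_aI^{n,\rho,g}$, and then conclude from $D^{n,\rho,g}\,{_aI^{n,\rho,g}}f=f$. The paper's proof simply asserts this last integer-order identity without comment, whereas you justify it via Theorem \ref{THM2} (or the explicit base-case computation), which is a welcome clarification rather than a different route.
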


\begin{proof}
By the definition and Theorem \ref{THM1}, we have
$$~_{a}D^{\alpha, \rho,g} ~_{a}I^{\alpha, \rho,g} f(t)=D^{n,\rho,g}~_{a}I^{n-\alpha, \rho,g}  ~_{a}I^{\alpha, \rho,g} f(t)= D^{n,\rho,g}~_{a}I^{n, \rho,g} f(t)=f(t).$$
\end{proof}

\section{The Caputo fractional proportional derivative of a function with respect to another function }
\begin{definition}
For $\rho \in (0,1]$ and $\alpha \in \mathbb{C}$ with $Re(\alpha)\geq 0$ we define the left derivative of  Caputo type as
\begin{align}\label{CFP}
 &(^{C}_{a}D^{\alpha,\rho,g} f)(t)=_{a}I^{n-\alpha,\rho,g} (D^{n,\rho,g}f)(t)\\\nonumber
 &=\frac{1}{\rho^{n-\alpha}\Gamma(n-\alpha)}\int_a^t e^{\frac{\rho-1}{\rho}(g(t)-g(s))}(g(t)-g(s))^{n-\alpha-1}(D^{n,\rho,g}f)(s)g'(s)ds.
\end{align}
Similarly, the right  derivative of  Caputo type ending  is defined by
\begin{align}\label{rCFP}
 &(^{C}D_b^{\alpha,\rho} f)(t)= I_b^{n-\alpha,\rho,g} (_{\ominus}D^{n,\rho,g}f)(t)\\\nonumber 
 &= \frac{1}{\rho^{n-\alpha}\Gamma(n-\alpha)}\int_t^b e^{\frac{\rho-1}{\rho}(g(s)-g(t))}(g(s)-g(t))^{n-\alpha-1}(~_{\ominus}D^{n,\rho,g}f)(s)g'(s)ds,
\end{align}
where $n=[Re(\alpha)]+1$.
\end{definition}

\begin{proposition} \label{4.2}
Let $\alpha, \beta \in \mathbb{C}$ be such that $Re(\alpha)> 0$ and $Re(\beta)>0$. Then, for any $\rho \in (0,1]$ and $n=[Re(\alpha)]+1$ we have
\begin{enumerate}

  \item $\big(^{C}_{a}D^{\alpha,\rho,g} e^{\frac{\rho-1}{\rho}g(x)} (g(x)-g(a))^{\beta-1}\big)(t)=\frac{\rho^\alpha\Gamma(\beta)}{\Gamma(\beta-\alpha)}e^{\frac{\rho-1}{\rho}g(t)}(g(t)-g(a))^{\beta-1-\alpha},$\\$~~~Re(\beta)> n.$
    \item $\big(^{C}D_b^{\alpha,\rho,g} e^{-\frac{\rho-1}{\rho}g(x)} (g(b)-g(x))^{\beta-1}\big)(t)=\frac{\rho^\alpha\Gamma(\beta)}{\Gamma(\beta-\alpha)}e^{-\frac{\rho-1}{\rho}g(t)}(g(b)-g(t))^{\beta-1-\alpha},$\\$~~~Re(\beta)> n.$
\end{enumerate}
For $k=0,1,\ldots,n-1$, we have $$\big(^{C}_{a}D^{\alpha,\rho,g} e^{\frac{\rho-1}{\rho}g(x)} (g(x)-g(a)^{k}\big)(t)=0\quad \mbox{and}\quad \big(^{C}D_b^{\alpha,\rho,g} e^{-\frac{\rho-1}{\rho}g(x)} (g(b)-g(x))^{k}\big)(t)=0.$$ In particular, $(~^{C}_{a}D^{\alpha,\rho} e^{\frac{\rho-1}{\rho}g(x})(t)=0$ and $(^{C}D_b^{\alpha,\rho} e^{-\frac{\rho-1}{\rho}g(x)})(t)=0$.
\end{proposition}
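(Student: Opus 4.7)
The plan is to reduce everything to the identity already established in the proof of Proposition \ref{2.4}, namely
\[
D^{\rho,g}\!\Bigl(h(t)\,e^{\frac{\rho-1}{\rho}g(t)}\Bigr)=\rho\,\frac{h'(t)}{g'(t)}\,e^{\frac{\rho-1}{\rho}g(t)},
\]
which kills the exponential factor and simply differentiates $h$ along $g$. Applied with $h(t)=(g(t)-g(a))^{\beta-1}$ one iteration gives $\rho(\beta-1)e^{\frac{\rho-1}{\rho}g(t)}(g(t)-g(a))^{\beta-2}$, and iterating $n$ times produces
\[
D^{n,\rho,g}\!\Bigl(e^{\frac{\rho-1}{\rho}g(t)}(g(t)-g(a))^{\beta-1}\Bigr)=\rho^{n}\,\frac{\Gamma(\beta)}{\Gamma(\beta-n)}\,e^{\frac{\rho-1}{\rho}g(t)}(g(t)-g(a))^{\beta-n-1}.
\]
Note that this is legitimate precisely when $Re(\beta)>n$, which is why that hypothesis is imposed on part 1.

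Next, using the Caputo definition \eqref{CFP}, I would feed this expression into $_{a}I^{n-\alpha,\rho,g}$ and invoke Proposition \ref{2.4}(a) with parameter $\beta-n$ in place of $\beta$. That proposition yields
\[
{}_{a}I^{n-\alpha,\rho,g}\!\Bigl(e^{\frac{\rho-1}{\rho}g(x)}(g(x)-g(a))^{\beta-n-1}\Bigr)(t)=\frac{\Gamma(\beta-n)}{\Gamma(\beta-\alpha)\,\rho^{n-\alpha}}\,e^{\frac{\rho-1}{\rho}g(t)}(g(t)-g(a))^{\beta-\alpha-1}.
\]
Multiplying the two constants gives $\rho^{n}\cdot\frac{\Gamma(\beta)}{\Gamma(\beta-n)}\cdot\frac{\Gamma(\beta-n)}{\Gamma(\beta-\alpha)\rho^{n-\alpha}}=\frac{\rho^{\alpha}\Gamma(\beta)}{\Gamma(\beta-\alpha)}$, which is exactly the asserted identity in part 1. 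Part 2 is handled by the mirror computation using $_\ominus D^{\rho,g}$ together with Proposition \ref{2.4}(b); the only subtlety is tracking the sign $_\ominus D^{\rho,g}\!\bigl(h(t)e^{-\frac{\rho-1}{\rho}g(t)}\bigr)=-\rho\,\frac{h'(t)}{g'(t)}\,e^{-\frac{\rho-1}{\rho}g(t)}$, but $(-1)^{n}$ gets absorbed into the falling factorial coming from differentiating $(g(b)-g(x))^{\beta-1}$, so the final form matches.

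For the vanishing assertions, I would iterate the same identity on $h(t)=(g(t)-g(a))^{k}$ with $k\in\{0,1,\dots,n-1\}$: after $n$ applications of $D^{\rho,g}$ the resulting coefficient is $\rho^{n}k(k-1)\cdots(k-n+1)$, which contains the factor $0$ because $k\le n-1$ forces one of $k,k-1,\dots,k-n+1$ to vanish. Hence $D^{n,\rho,g}(e^{\frac{\rho-1}{\rho}g(t)}(g(t)-g(a))^{k})\equiv 0$, and applying $_{a}I^{n-\alpha,\rho,g}$ to zero keeps it zero. The case $k=0$ specializes to $(\,^{C}_{a}D^{\alpha,\rho,g}e^{\frac{\rho-1}{\rho}g(x)})(t)=0$. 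The right-sided vanishing is identical after replacing $D^{\rho,g}$ by $_\ominus D^{\rho,g}$.

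The main potential obstacle is bookkeeping: one must verify that the telescoping product of $(\beta-1)(\beta-2)\cdots(\beta-n)$ cleanly equals $\Gamma(\beta)/\Gamma(\beta-n)$ (which requires $Re(\beta)>n$ to avoid hitting poles of $\Gamma$) and that the powers of $\rho$ combine to $\rho^{\alpha}$. Neither is deep, but the calculation must be done carefully; once this is in hand the proposition follows immediately from Proposition \ref{2.4} without any further integral manipulation.
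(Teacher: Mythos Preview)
Your proposal is correct and follows essentially the same approach as the paper: compute $D^{n,\rho,g}$ on the test function using the identity $D^{\rho,g}\bigl(h\,e^{\frac{\rho-1}{\rho}g}\bigr)=\rho\,\frac{h'}{g'}\,e^{\frac{\rho-1}{\rho}g}$, then apply Proposition~\ref{2.4}(a) and combine constants. If anything, your write-up is more complete than the paper's, since you also spell out the vanishing assertions for $k\le n-1$ and the right-sided case, which the paper leaves implicit.
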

\begin{proof}
We only prove the first relation. The proof of the second relation  is similar. We have
\begin{align*}
&(^{C}_{a}D^{\alpha,\rho,g} e^{\frac{\rho-1}{\rho}g(x)} (g(x)-g(a))^{\beta-1})(t)= ~_{a}I^{n-\alpha,\rho,g} D^{n,\rho,g} \left[e^{\frac{\rho-1}{\rho}g(t)} (g(t)-g(a))^{\beta-1} \right]\\
   &=~_{a}I^{n-\alpha,\rho,g} 
   \left[ \rho^n (\beta-1)(\beta-2)\ldots(\beta-1-n) (g(t)-g(a))^{\beta-n-1} e^{\frac{\rho-1}{\rho}g(t)}\right] \\
   &= \frac{\rho^n (\beta-1)(\beta-2)\ldots(\beta-1-n)\Gamma(\beta-n)} {\Gamma(\beta-\alpha)\rho^{n-\alpha}} (g(t)-g(a))^{\beta-\alpha-1} e^{\frac{\rho-1}{\rho}g(t)}\\
   &= \frac{\rho^\alpha\Gamma(\beta)}{\Gamma(\beta-\alpha)}e^{\frac{\rho-1}{\rho}g(t)}(g(t)-g(a))^{\beta-1-\alpha}.
\end{align*}
\end{proof}

\section{Conclusions}
We have used the  proportional derivatives of a function with respect to another  to obtain left and right  generalized type of fractional integrals and derivatives involving two parameters $\alpha$ and $\rho$ and depending on a kernel function . The Riemann--Liouville and Caputo fractional derivatives in classical fractional calculus can  obtained as $\rho$ tends to $1$ and by choosing $g(x)=1$. The  integrals have the semi--group property and together with their corresponding derivatives have exponential functions as part of their kernels.  It should be noted that other properties of these new operators can be obtained by using  the Laplace transform proposed in \cite{fahd10}. Moreover, for a specific choice of $g$, the proportional fractional operators in the settings of Hadamard and Katugamplola can be extracted.
\vspace{2cm}

\noindent \textbf{Funding} This research was funded by the Deanship of Scientific Research at  Princess  Nourah  bint  Abdulrahman University through the Fast-track Research Funding Program.\\
\textbf{Availability of data and materials} Not applicable.\\
\textbf{Competing interests} The authors have no competing interest regarding this article.\\
\textbf{Authors contributions} All authors have done equal contribution in this article. All authors read and approved the last version of the manuscript.

\end{document}